\documentclass[12pt,a4paper]{amsart}
\usepackage{amsfonts}
\usepackage{amsthm}
\usepackage{amsmath}
\usepackage{amscd}
\usepackage[latin2]{inputenc}
\usepackage{t1enc}
\usepackage[mathscr]{eucal}
\usepackage{indentfirst}
\usepackage{graphicx}
\usepackage{graphics}
\numberwithin{equation}{section}
\usepackage[margin=2.9cm]{geometry}
\usepackage{epstopdf}

\theoremstyle{plain}
\newtheorem{Th}{Theorem}[section]
\newtheorem{Lemma}[Th]{Lemma}
\newtheorem{Cor}[Th]{Corollary}

 \theoremstyle{definition}
\newtheorem{defi}[Th]{Definition}

\newtheorem{?}[Th]{Problem}

\begin{document}

\title{Sasakian Manifold and  $*$-Ricci Tensor}

\author{Venkatesha $\cdot$ Aruna Kumara H}

\address{Department of Mathematics, Kuvempu University,\\
	Shankaraghatta - 577 451, Shimoga, Karnataka, INDIA.\\} 

\email{vensmath@gmail.com, arunmathsku@gmail.com}

\begin{abstract}  The purpose of this paper is to study $*$-Ricci tensor on Sasakian manifold. Here, $\varphi$-confomally flat and confomally flat $*$-$\eta$-Einstein Sasakian manifold are studied. Next, we consider $*$-Ricci symmetric conditon on Sasakian manifold. Finally, we study a special type of metric called $*$-Ricci soliton on Sasakian manifold.
\end{abstract}
\subjclass[2010]{53D10 $\cdot$  53C25 $\cdot$ 53C15 $\cdot$ 53B21.}	
\keywords{Sasakian metric $\cdot$ $*$-Ricci tensor $\cdot$ Conformal curvature tensor $\cdot$ $\eta$-Einstein manifold.}

\maketitle

\section{Introduction} The notion of contact geometry has evolved from the mathematical
formalism of classical mechanics\cite{Gei}. Two important classes of contact manifolds are $K$-contact manifolds and Sasakian manifolds\cite{DEB}. An odd dimensional analogue of Kaehler geometry is the Sasakian geometry. Sasakian manifolds were firstly studied by the famous geometer Sasaki\cite{Sasa} in 1960, and for long time focused on this, Sasakian manifold have been extensively studied under several points of view in \cite{Chaki,De,Ikw,Olz,Tanno}, and references therein.
\par On the other hand, it is mentioned that  the notion
of $*$-Ricci tensor was first introduced by Tachibana \cite{TS} on almost
Hermitian manifolds and further studied by Hamada and Inoguchi \cite{HT} on real hypersurfaces of non-flat complex space forms.
\par Motivated by these studies the present paper is organized as follows: In section 2, we recall some basic formula and result concerning Sasakian manifold and $*$-Ricci tensor which we will use in further sections. A $\varphi$-conformally flat Sasakian manifold is studied in section 3, in which we obtain some intersenting result. Section 4 is devoted to the study of conformally flat $*$-$\eta$-Einstein Sasakian manifold. In section 5, we consider $*$-Ricci symmetric Sasakian manifold and found that  $*$-Ricci symmetric Sasakian manifold is $*$-Ricci flat, moreover, it is $\eta$-Einstein manifold. In the last section, we studied a special type of metric called $*$-Ricci soliton. Here we have proved a important result on Sasakian manifold admitting $*$-Ricci soliton. 

\section{Preliminaries}
In this section, we collect some general definition and basic formulas on contact metric manifolds and Sasakian manifolds which we will use in further sections. We may refer to \cite{Ar,Bo,Kus} and references therein for more details and information about Sasakian geometry.
\par A (2n+1)-dimensional smooth connected manifold $M$ is called almost contact manifold if it admits a triple $(\varphi, \xi, \eta)$, where $\varphi$ is a tensor field of type $(1,1)$, $\xi$ is a global vector field and $\eta$ is a 1-form, such that 
\begin{align}
\label{2.1} \varphi^2X=-X+\eta(X)\xi, \qquad \eta(\xi)=1, \qquad \varphi\xi=0, \qquad \eta\circ\varphi=0,
\end{align}  
for all $X, Y\in TM$. If an almost contact manifold $M$ admits a structure $(\varphi, \xi, \eta, g)$, $g$ being a Riemannian metric such that
\begin{align}
\label{2.2} g(\varphi X,\varphi Y)=g(X,Y)-\eta(X)\eta(Y),
\end{align} 
then $M$ is called an almost contact metric manifold. An almost contact metric manifold $M(\varphi, \xi, \eta, g)$ with $d\eta(X,Y)=\Phi(X,Y)$, $\Phi$ being the fundamental 2-form of $M(\varphi, \xi,\eta,g)$ as defined by $\Phi(X,Y)=g(X,\varphi Y)$, is a contact metric manifold and $g$ is the associated metric. If, in addition, $\xi$ is a killing vector field (equivalentely, $h=\frac{1}{2}L_\xi \varphi=0$, where $L$ denotes Lie differentiation), then the manifold is called $K$-contact manifold. It is well known that\cite{DEB}, if the contact metric structure $(\varphi,\xi,\eta,g)$ is normal, that is, $[\varphi,\varphi]+2d\eta\otimes\xi=0$ holds, then $(\varphi,\xi,\eta,g)$ is Sasakian. An almost contact metric manifold is Sasakian if and only if
\begin{align}
\label{2.3} (\nabla_X \varphi)Y=g(X,Y)\xi-\eta(Y)X,
\end{align}
for any vector fields $X, Y$ on $M$, where $\nabla$ is Levi-Civita connection of $g$. A Sasakian manifold is always a $K$-contact manifold. The converse also holds when the dimension is three, but which may not be true in higher dimensions\cite{JB}. On Sasakian manifold, the following relations are well known;
\begin{align}
\label{a2.4}\nabla_X \xi&=-\varphi X\\
\label{2.4} R(X,Y)\xi&=\eta(Y)X-\eta(X)Y,\\
\label{2.5} R(\xi, X)Y&=g(X,Y)\xi-\eta(Y)X,\\
\label{2.6} Ric(X,\xi)&=2n\eta(X)\qquad (or\,\, Q\xi=2n\xi),
\end{align}
for all $X,Y\in TM$, where $R, Ric$ and $Q$ denotes the curvature tensor, Ricci tensor and Ricci operator, respectively.
\par On the other hand, let $M(\varphi,\xi,\eta,g)$ be an almost contact metric manifold with Ricci tensor $Ric$. The $*$-Ricci tensor and $*$-scalar curvature of $M$ repectively are defined by
\begin{align}
\label{2.7}Ric^*(X,Y)=\sum_{i=1}^{2n+1}R(X,e_i,\varphi e_i, \varphi Y),\qquad r^*=\sum_{i=1}^{2n+1}Ric^*(e_i,e_i),
\end{align}
for all $X,Y\in TM$, where ${e_1,...,e_{2n+1}}$ is an orthonormal basis of the tangent space $TM$. By using the first Bianchi identity and \eqref{2.7} we get
\begin{align}
Ric^*(X,Y)=\frac{1}{2}\sum_{i=1}^{2n+1}g(\varphi R(X,\varphi Y)e_i,e_i).
\end{align} 
An almost contact metric manifold is said to be $*$-Einstein if $Ric^*$ is a constant
multiple of the metric $g$. One can see $Ric^*(X,\xi)=0$, for all $X \in TM$. It should be remarked that $Ric^*$ is not symmetric, in general. Thus the
condition $*$-Einstein automatically requires a symmetric property of the $*$-Ricci tensor\cite{HT}.

Now we make an effort to find $*$-Ricci tensor on Sasakian manifold.
\begin{Lemma}
	In a (2n+1)-dimensional Sasakian manifold $M$, the $*$-Ricci tensor is given by
	\begin{align}
	\label{2.9} Ric^*(X,Y)=Ric(X,Y)-(2n-1)g(X,Y)-\eta(X)\eta(Y).
	\end{align}
\end{Lemma}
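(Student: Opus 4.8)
The plan is to reduce everything to the single curvature identity that governs how $R$ interacts with $\varphi$ on a Sasakian manifold, and then to trace against an orthonormal frame. First I would derive, purely from the defining relation \eqref{2.3}, the identity
\begin{align}
R(X,Y)\varphi Z-\varphi R(X,Y)Z=g(X,Z)\varphi Y-g(Y,Z)\varphi X+g(\varphi X,Z)Y-g(\varphi Y,Z)X. \nonumber
\end{align}
This follows by computing the commutator of second covariant derivatives $(\nabla^2_{X,Y}\varphi)Z-(\nabla^2_{Y,X}\varphi)Z$ directly from \eqref{2.3}, and equating it with the Ricci commutation identity $(\nabla^2_{X,Y}\varphi-\nabla^2_{Y,X}\varphi)Z=R(X,Y)\varphi Z-\varphi R(X,Y)Z$. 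Here one only needs \eqref{a2.4} (i.e.\ $\nabla_X\xi=-\varphi X$) together with $(\nabla_X\eta)(Z)=-g(\varphi X,Z)$; after substitution the $\xi$-terms cancel and the stated identity drops out.

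Next I would specialize this identity to $Y=Z=e_i$ and prepare to sum. Since $g(\varphi e_i,e_i)=0$ and $g(e_i,e_i)=1$, solving for the relevant term gives
\begin{align}
R(X,e_i)\varphi e_i=\varphi R(X,e_i)e_i+g(X,e_i)\varphi e_i-\varphi X+g(\varphi X,e_i)e_i. \nonumber
\end{align}
Substituting this into the definition $Ric^*(X,Y)=\sum_i g(R(X,e_i)\varphi e_i,\varphi Y)$ splits the computation into four sums, which I would evaluate one at a time.

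The key bookkeeping is as follows. By \eqref{2.2} the first sum equals $\sum_i g(R(X,e_i)e_i,Y)-\eta(Y)\sum_i\eta(R(X,e_i)e_i)$; the trace $\sum_i g(R(X,e_i)e_i,Y)$ equals $Ric(X,Y)$ for the present curvature convention, and $\sum_i\eta(R(X,e_i)e_i)=Ric(X,\xi)=2n\eta(X)$ by \eqref{2.6}, so the first sum contributes $Ric(X,Y)-2n\eta(X)\eta(Y)$. The second sum is $g(\varphi X,\varphi Y)=g(X,Y)-\eta(X)\eta(Y)$ via the completeness relation $\sum_i g(X,e_i)e_i=X$ together with \eqref{2.2}; the third and fourth sums combine, again by completeness and \eqref{2.2}, into $-2n\,g(\varphi X,\varphi Y)=-2n[g(X,Y)-\eta(X)\eta(Y)]$. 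Adding the three contributions and collecting the coefficients of $g(X,Y)$ and of $\eta(X)\eta(Y)$ then yields \eqref{2.9}.

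The main obstacle I anticipate is the first step: pinning down the signs in the curvature identity and verifying that the trace $\sum_i g(R(X,e_i)e_i,Y)$ really returns $+Ric(X,Y)$ rather than its negative under the sign convention fixed by \eqref{2.4}--\eqref{2.6}. A quick consistency check removes this worry: testing on $\xi$, relation \eqref{2.5} gives $R(\xi,e_i)e_i=\xi-\eta(e_i)e_i$, whence $\sum_i g(R(\xi,e_i)e_i,\xi)=2n=Ric(\xi,\xi)$, matching \eqref{2.6}. Once the conventions are nailed down, the remaining steps are routine frame manipulations.
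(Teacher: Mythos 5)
Your proof is correct, but it takes a genuinely different route from the paper. The paper's proof is essentially a citation: it invokes the identity $Ric(X,Y)=\frac{1}{2}\sum_{i}g(\varphi R(X,\varphi Y)e_i,e_i)+(2n-1)g(X,Y)+\eta(X)\eta(Y)$ from Yano--Kon (Lemma 5.3, p.~284 of \cite{YKK}) and matches its first term with the Bianchi-rearranged form of the definition of $Ric^*$ stated just before the Lemma; the whole argument is two lines. You instead give a self-contained, first-principles derivation: you compute $R(X,Y)\varphi Z-\varphi R(X,Y)Z$ by applying the Ricci commutation identity to $\nabla\varphi$ using only \eqref{2.3} and \eqref{a2.4}, and then trace against an orthonormal frame directly in the original definition \eqref{2.7}, with no need for the Bianchi-identity rearrangement. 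I checked your curvature identity (the $\xi$-terms do cancel, and the signs are consistent with the conventions \eqref{2.4}--\eqref{2.6}), the evaluation of the four sums (in particular Sum 1 gives $Ric(X,Y)-2n\eta(X)\eta(Y)$ and Sums 3 and 4 combine to $-2n\,g(\varphi X,\varphi Y)$), and the final bookkeeping; everything lands exactly on \eqref{2.9}. What the paper's approach buys is brevity, at the cost of importing an external lemma whose sign conventions must be checked against the paper's own; what your approach buys is transparency and independence from the reference --- in effect you have reproved the Yano--Kon identity, and your explicit consistency check $\sum_i g(R(\xi,e_i)e_i,\xi)=2n=Ric(\xi,\xi)$ is precisely the kind of convention-verification the paper's citation silently assumes.
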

\begin{proof}
	In a (2n+1)-dimensional Sasakian manifold $M$, the Ricci tensor $Ric$ satisfies the relation (see page 284, Lemma 5.3 in \cite{YKK}):
	\begin{align}
	\label{2.10}	Ric(X,Y)=\frac{1}{2}\sum_{i=1}^{2n+1}g(\varphi R(X,\varphi Y)e_i,e_i)+(2n-1)g(X,Y)+\eta(X)\eta(Y).
	\end{align}
	Using the definition of $Ric^*$ in \eqref{2.10}, we obtain \eqref{2.9}
\end{proof}	
\begin{defi}\label{d2.1} \cite{JTC} An almost contact metric manifold $M$ is said to be weakly $\varphi$-Einstein if
	\begin{align*}
	Ric^\varphi(X,Y)=\beta g^\varphi(X,Y),\quad X,Y\in TM,
	\end{align*}
	for some function $\beta$. Here $Ric^\varphi$ denotes the symmetric part of $Ric^*$, that is,
	\begin{align*}
	Ric^\varphi(X,Y)=\frac{1}{2}\{Ric^*(X,Y)+Ric^*(Y,X)\}, \quad X,Y\in TM,
	\end{align*}
	we call $Ric^\varphi$, the $\varphi$-Ricci tensor on $M$ and the symmetric tensor $g^\varphi$ is defined by $g^\varphi(X,Y)=g(\varphi X, \varphi Y)$. When $\beta$ is constant, then $M$ is said to be $\varphi$-Einstein.
\end{defi}
\begin{defi}
	If the Ricci tensor of a Sasakian manifold $M$ is of the form
	\begin{align*}
	Ric(X,Y)=\alpha g(X,Y)+\gamma \eta(X)\eta(Y),
	\end{align*} 
	for any vector fields $X, Y$ on $M$, where $\alpha$ and $\gamma$ being constants, then $M$ is called an $\eta$-Einstein manifold. 
\end{defi}
Let $M(\varphi,\xi,\eta,g)$ be a Sasakian $\eta$-Einstein manifold with constants $(\alpha, \gamma)$. Consider a $D$-homothetic Sasakian structure $S = (\varphi',\xi',\eta',g')=(\varphi,a^{-1}\xi,a\eta,ag + a(a-1)\eta\otimes\eta)$.  Then $(M,S)$ is also $\eta$-Einstein with constants
$\alpha'=\frac{\alpha+2-2a}{a}$ and $\gamma'=2n-\alpha'$ (see proposition 18 in \cite{Cha}).		
Here we make a remark that the particular value: $\alpha=-2$ remains fixed under a
$D$-homothetic deformation\cite{AG}. Thus, we state the following definition.
\begin{defi}
	A Sasakian $\eta$-Einstein manifold with $\alpha=-2$ is said to be
	D-homothetically fixed.
\end{defi}

\section{$\varphi$-conformally Flat Sasakian Manifold}
The Weyl conformal curvature tensor\cite{YKK} is defined as a map $C:TM\times TM\times TM \longrightarrow TM$ such that
\begin{align}
\nonumber C(X,Y)Z=&R(X,Y)Z-\frac{1}{2n-1}\{Ric(Y,Z)X-Ric(X,Z)Y+g(Y,Z)QX\\
\label{3.1} &-g(X,Z)QY\}+\frac{r}{2n(2n-1)}\{g(Y,Z)X-g(X,Z)Y\}, \quad X,Y\in TM.
\end{align}
In\cite{CAR}, Cabrerizo et al proved some necessary condition for $K$-contact manifold to be $\varphi$-conformally flat. In the following theorem we find a condition for $\varphi$-conformally flat Sasakian manifold.
\begin{Th}
	If a (2n+1)-dimensional Sasakian manifold $M$ is $\varphi$-conformally flat, then $M$ is $*$-$\eta$-Einstein manifold. Moreover, $M$ is weakly $\varphi$-Einstein.
\end{Th}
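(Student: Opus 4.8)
The plan is to start from the characterization of $\varphi$-conformal flatness used by Cabrerizo et al., namely that $g(C(\varphi X,\varphi Y)\varphi Z,\varphi W)=0$ for all vector fields $X,Y,Z,W$, and to expand it by means of \eqref{3.1}. First I would substitute $\varphi X,\varphi Y,\varphi Z,\varphi W$ into the conformal curvature tensor and observe that every term carrying an explicit factor $\eta(\varphi\,\cdot)$ drops out, since $\eta\circ\varphi=0$. What survives is a relation among $g(R(\varphi X,\varphi Y)\varphi Z,\varphi W)$, the Ricci terms $Ric(\varphi\,\cdot,\varphi\,\cdot)$, and the metric terms $g(\varphi\,\cdot,\varphi\,\cdot)$, with the coefficients $\tfrac{1}{2n-1}$ and $\tfrac{r}{2n(2n-1)}$ coming from \eqref{3.1}.

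Next I would contract this identity over an orthonormal frame $\{e_1,\dots,e_{2n},e_{2n+1}=\xi\}$, setting $X=W=e_i$ and summing over $i$. Because $\varphi\xi=0$, the effective sum runs over $\{\varphi e_i\}_{i=1}^{2n}$, an orthonormal basis of the contact distribution $\ker\eta$. This bookkeeping is the heart of the argument and the step most prone to error: one needs $\sum_i g(\varphi e_i,\varphi e_i)=2n$, the trace $\sum_i Ric(\varphi e_i,\varphi e_i)=r-2n$ (via the Sasakian identity $Ric(\varphi X,\varphi Y)=Ric(X,Y)-2n\,\eta(X)\eta(Y)$, which follows from $Q\varphi=\varphi Q$ and \eqref{2.6}), the projection identity $\sum_i g(U,\varphi e_i)\varphi e_i=U-\eta(U)\xi$ to dispose of the mixed terms, and the curvature relation \eqref{2.5} to evaluate $g(R(\xi,\varphi Y)\varphi Z,\xi)=g(\varphi Y,\varphi Z)$. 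I expect the $R$-contraction on the left to collapse to $Ric(\varphi Y,\varphi Z)-g(\varphi Y,\varphi Z)$ and the right-hand brackets to simplify so that everything reduces to the single scalar identity $Ric(\varphi Y,\varphi Z)=\bigl(\tfrac{r}{2n}-1\bigr)g(\varphi Y,\varphi Z)$.

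From there the conclusions follow by algebra. Rewriting with $g(\varphi Y,\varphi Z)=g(Y,Z)-\eta(Y)\eta(Z)$ and $Ric(\varphi Y,\varphi Z)=Ric(Y,Z)-2n\,\eta(Y)\eta(Z)$ converts the scalar identity into an $\eta$-Einstein-type expression for $Ric$. Substituting this into the $*$-Ricci formula \eqref{2.9} of the Lemma then gives $Ric^*(Y,Z)=\bigl(\tfrac{r}{2n}-2n\bigr)\bigl(g(Y,Z)-\eta(Y)\eta(Z)\bigr)$, i.e. $Ric^*$ is a scalar multiple of $g^\varphi$, which is exactly the $*$-$\eta$-Einstein condition. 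Finally, since this $Ric^*$ is manifestly symmetric, its symmetric part $Ric^\varphi$ coincides with it, so that $Ric^\varphi(Y,Z)=\beta\,g^\varphi(Y,Z)$ with $\beta=\tfrac{r}{2n}-2n$; by Definition \ref{d2.1} this is precisely the assertion that $M$ is weakly $\varphi$-Einstein. The only genuine obstacle I anticipate is the index bookkeeping in the contraction of the second paragraph, where the distinction between summing over $TM$ and over $\ker\eta$ (due to the $\xi$-direction) must be tracked carefully.
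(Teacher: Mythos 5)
Your proposal is correct, but it reaches the conclusion by a genuinely different route than the paper. The paper quotes the full curvature identity \eqref{3.2} of Cabrerizo et al., strips the $\varphi$'s with the Sasakian identity \eqref{3.3} to reconstruct the complete curvature tensor \eqref{3.4}, and then evaluates $Ric^*$ directly from the definition \eqref{2.7} by contraction, arriving at $Ric^*=\frac{r-4n}{2n(2n-1)}\,g^\varphi$. You instead take a single trace of the expanded flatness condition $g(C(\varphi X,\varphi Y)\varphi Z,\varphi W)=0$ built from \eqref{3.1}; your frame bookkeeping is right (indeed $\sum_i g(\varphi e_i,\varphi e_i)=2n$, $\sum_i Ric(\varphi e_i,\varphi e_i)=r-2n$, $\sum_i g(U,\varphi e_i)\varphi e_i=U-\eta(U)\xi$, and $g(R(\xi,\varphi Y)\varphi Z,\xi)=g(\varphi Y,\varphi Z)$), and the trace does collapse to $Ric(\varphi Y,\varphi Z)=\bigl(\tfrac{r}{2n}-1\bigr)g(\varphi Y,\varphi Z)$; you then pass to $Ric^*$ via Lemma \eqref{2.9} rather than via the definition. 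Your route is more economical: it uses only the trace of the flatness condition (not the full identity \eqref{3.2}), and it produces the $\eta$-Einstein form of $Ric$ — the paper's second corollary of this theorem — as an intermediate step rather than as a consequence. The paper's route buys more: the entire curvature tensor \eqref{3.4}, which is what feeds its Section 4 results. One instructive discrepancy worth recording: your coefficient is $\beta'=\frac{r-4n^2}{2n}$ while the paper's is $\beta=\frac{r-4n}{2n(2n-1)}$. Neither computation is wrong; the two agree identically when $n=1$, but for $n>1$ they agree only if $r=2n(2n+1)$. Since both are valid consequences of the hypothesis, their juxtaposition shows that for $n>1$ a $\varphi$-conformally flat Sasakian manifold automatically has constant scalar curvature $r=2n(2n+1)$ (and hence, by \eqref{3.4}, constant curvature $+1$) — a sharper statement than the theorem itself, and one that neither proof alone exposes.
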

\begin{proof}
	It is well known that (see in \cite{CAR}), if a $K$-contact manifold is $\varphi$-conformally flat then we get the following relation:
	\begin{align}
	\label{3.2} R(\varphi X, \varphi Y, \varphi Z, \varphi W)=\frac{r-4n}{2n(2n-1)}\{g(\varphi Y,\varphi Z)g(\varphi X, \varphi W)-g(\varphi X,\varphi Z)g(\varphi Y, \varphi W)\}.
	\end{align}
	In a Sasakian manifold, in view of \eqref{2.4} and \eqref{2.5} we can verify that
	\begin{align}
	\nonumber R(\varphi^2 X, \varphi^2Y,\varphi^2Z,\varphi^2W)=&R(X,Y,Z,W)-g(Y,Z)\eta(X)\eta(Y)+g(X,Z)\eta(Y)\eta(W)\\
	\label{3.3} &+g(Y,W)\eta(X)\eta(Z)-g(X,W)\eta(Y)\eta(Z),
	\end{align}	
	for all $X,Y,Z,W \in TM$. Replacing $X, Y,Z,W$ by $\varphi X, \varphi Y, \varphi Z, \varphi W$ respectively in \eqref{3.2} and making use of \eqref{2.2} and \eqref{3.3} we get
	\begin{align}
	\nonumber R(X,Y,Z,W)&=\frac{r-4n}{2n(2n-1)}\{g(Y,Z)g(X,W)-g(X,Z)g(Y,W)\}\\
	\nonumber&-\frac{r-2n(2n+1)}{2n(2n-1)}\{g(Y,Z)\eta(X)\eta(W)-g(X,Z)\eta(Y)\eta(W)\\
	&+g(X,W)\eta(Y)\eta(Z)-g(Y,W)\eta(X)\eta(Z)\}.
	\end{align}
	By the definition of $Ric^*$, direct computation yields
	\begin{align}
	\label{3.5} Ric^*(X,Y)=\sum_{i=1}^{2n+1}R(X,e_i,\varphi e_i, \varphi Y)=\beta g(X,Y)-\beta\eta(X)\eta(Y),
	\end{align}
	where $\beta=\frac{r-4n}{2n(2n-1)}$, showing that $M$ is $*$-$\eta$-Einstein. Next, in view of \eqref{2.2} we have 
	\begin{align}
	\label{3.6} Ric^*(X,Y)=\frac{r-4n}{2n(2n-1)} g^\varphi(X,Y),
	\end{align}
	for all $X,Y \in TM$. Hence $Ric^*=Ric^\varphi$ and hence it is weakly $\varphi$-Einstein. This completes the proof.
\end{proof}	
Suppose the scalar curvature of the manifold is constant. Then in view of \eqref{3.6}, we have
\begin{Cor}
	If a $\varphi$-conformally flat Sasakian manifold has constant scalar curvature, then it is $\varphi$-Einstein.
\end{Cor}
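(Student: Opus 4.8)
The plan is to obtain the conclusion directly from the identity established while proving the Theorem, so very little new work is required. The decisive fact is that $\varphi$-conformal flatness already forces the weakly $\varphi$-Einstein relation with a \emph{completely explicit} proportionality factor, namely equation \eqref{3.6},
\begin{align*}
Ric^*(X,Y)=\frac{r-4n}{2n(2n-1)}\,g^\varphi(X,Y),
\end{align*}
together with the coincidence $Ric^*=Ric^\varphi$ noted there. Thus the manifold is weakly $\varphi$-Einstein with the specific function $\beta=\frac{r-4n}{2n(2n-1)}$, and the entire task reduces to deciding when this $\beta$ is constant.

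First I would recall the precise distinction drawn in Definition \ref{d2.1}: a manifold is weakly $\varphi$-Einstein when $Ric^\varphi=\beta g^\varphi$ for some \emph{function} $\beta$, and it is $\varphi$-Einstein exactly when that $\beta$ can be taken to be a \emph{constant}. Since the dimension $2n+1$ is fixed, the quantity $\frac{1}{2n(2n-1)}$ and the summand $\frac{4n}{2n(2n-1)}$ are numerical constants; the only place where the geometry of $M$ can enter $\beta$ is through the scalar curvature $r$ appearing in the numerator.

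Next I would invoke the standing hypothesis that the scalar curvature $r$ is constant. Substituting a constant $r$ into $\beta=\frac{r-4n}{2n(2n-1)}$ shows that $\beta$ is a constant, whence the weakly $\varphi$-Einstein condition upgrades to the $\varphi$-Einstein condition in the sense of Definition \ref{d2.1}. I do not anticipate any genuine obstacle here: the result is essentially a specialization of the Theorem, and the single point meriting care is simply to confirm that $\beta$ depends on the manifold only through $r$, so that constancy of $r$ is precisely what is needed to make $\beta$ constant. No further curvature computations, and no appeal to the Sasakian identities beyond what already yielded \eqref{3.6}, should be necessary.
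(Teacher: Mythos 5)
Your proposal is correct and matches the paper's own argument exactly: the paper likewise deduces the corollary directly from equation \eqref{3.6}, observing that constancy of $r$ makes $\beta=\frac{r-4n}{2n(2n-1)}$ constant, which upgrades weakly $\varphi$-Einstein to $\varphi$-Einstein in the sense of Definition \ref{d2.1}. No further comment is needed.
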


In a Sasakian manifold, the $*$-Ricci tensor is given by \eqref{2.9} and so in view of \eqref{3.5}, we state the following;
\begin{Cor}
	A $\varphi$-conformally flat Sasakian manifold is  $\eta$-Einstein.
\end{Cor}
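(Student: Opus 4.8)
The plan is to prove the corollary by directly confronting the two expressions for the $*$-Ricci tensor that are already at hand: equation \eqref{2.9} from the Lemma, valid on any $(2n+1)$-dimensional Sasakian manifold, and equation \eqref{3.5} from the preceding Theorem, valid under the $\varphi$-conformally flat hypothesis. Since both supply $Ric^*(X,Y)$ for all $X,Y\in TM$, equating them should isolate $Ric(X,Y)$ as a combination of $g$ and $\eta\otimes\eta$, which is exactly the $\eta$-Einstein shape.

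Concretely, I would first rewrite \eqref{2.9} as $Ric(X,Y)=Ric^*(X,Y)+(2n-1)g(X,Y)+\eta(X)\eta(Y)$ and then substitute the right-hand side of \eqref{3.5}, namely $Ric^*(X,Y)=\beta g(X,Y)-\beta\eta(X)\eta(Y)$ with $\beta=\frac{r-4n}{2n(2n-1)}$. This yields $Ric(X,Y)=(\beta+2n-1)\,g(X,Y)+(1-\beta)\,\eta(X)\eta(Y)$, so that setting $\alpha=\beta+2n-1$ and $\gamma=1-\beta$ casts the Ricci tensor as $Ric=\alpha g+\gamma\,\eta\otimes\eta$. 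As a consistency check one observes $\alpha+\gamma=2n$, matching $Ric(\xi,\xi)=2n$ read off from \eqref{2.6}.

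The main obstacle is reconciling this with the definition of $\eta$-Einstein adopted here, which requires $\alpha$ and $\gamma$ to be \emph{constants}, whereas a priori they depend on the scalar curvature $r$ through $\beta$. To settle this I would contract $Ric=\alpha g+\gamma\,\eta\otimes\eta$ over an orthonormal frame $\{e_i\}$, using $\sum_i\eta(e_i)^2=g(\xi,\xi)=1$, obtaining $r=(2n+1)\alpha+\gamma=2n\beta+4n^2$. Substituting $\beta=\frac{r-4n}{2n(2n-1)}$ turns this into a single algebraic equation for $r$, which for $n\geq 2$ forces $r=2n(2n+1)$ and hence $\beta=1$; consequently $r$ is constant and $\alpha=2n$, $\gamma=0$ are constants, so the manifold is $\eta$-Einstein in the strict sense demanded by the definition (indeed it comes out Einstein, with $Ric=2n\,g$). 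In the three-dimensional case $n=1$ the contraction is vacuous, and one instead invokes the classical fact that the $\eta$-Einstein functions of a Sasakian manifold are automatically constant.
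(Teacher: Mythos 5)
Your main line coincides with the paper's: the paper's entire proof of this corollary is the one-sentence observation that \eqref{2.9} and \eqref{3.5} combine to give $Ric(X,Y)=[\beta+2n-1]\,g(X,Y)+[1-\beta]\,\eta(X)\eta(Y)$ with $\beta=\frac{r-4n}{2n(2n-1)}$, and it stops there. Where you genuinely add something is the constancy question: the paper's own definition of $\eta$-Einstein requires constant coefficients, and the paper never verifies this. Your trace argument is correct: contracting gives $r=2n\beta+4n^2$, and eliminating $\beta$ yields $(2n-2)\,r=4n(2n+1)(n-1)$, so for $n\geq 2$ one gets $r=2n(2n+1)$, $\beta=1$, hence $\alpha=2n$, $\gamma=0$, and $M$ is in fact Einstein with $Ric=2n\,g$. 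This is stronger than the stated corollary, fills a real gap in the paper, and is consistent with the paper's Section 4, where conformally flat $*$-$\eta$-Einstein Sasakian manifolds are shown to have constant curvature $+1$ (whose Ricci tensor is exactly $2n\,g$). So for $n\geq 2$ your proof is complete and correct.

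The weak point is your dimension-three fallback. The ``classical fact'' you invoke is false when $n=1$: constancy of the $\eta$-Einstein functions of a Sasakian (or $K$-contact) manifold is a theorem only in dimension $\geq 5$. In dimension $3$, the identity $R(X,Y)\xi=\eta(Y)X-\eta(X)Y$ together with the fact that the curvature tensor is determined by the Ricci tensor forces $QX=\bigl(\frac{r}{2}-1\bigr)X$ for all $X$ orthogonal to $\xi$; thus every $3$-dimensional Sasakian manifold is weakly $\eta$-Einstein with the possibly non-constant function $\alpha=\frac{r}{2}-1$, and $r$ need not be constant (take the Boothby--Wang construction over a surface of non-constant Gauss curvature). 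Indeed no argument can rescue the case $n=1$: for $n=1$ the tensor \eqref{3.1} vanishes identically, so ``$\varphi$-conformally flat'' is a vacuous hypothesis, and the corollary would then assert that every $3$-dimensional Sasakian manifold is $\eta$-Einstein with constants, which is false. The honest resolution is to note that \eqref{3.2}, quoted from Cabrerizo et al., is established for $n>1$, so the theorem and this corollary implicitly carry the restriction $n\geq 2$ --- a restriction the paper itself elides --- and under that restriction your argument stands.
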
\
The notion of $\eta$-parallel Ricci tensor was introduced in the context of Sasakian manifold by Kon\cite{KON} and is defined by $(\nabla_Z Ric)(\varphi X,\varphi Y)=0$, for all $X,Y \in TM$. From this definition, we define a $\eta$-parallel $*$-Ricci tensor by $(\nabla_Z Ric^*)(\varphi X,\varphi Y)=0$.

Replacing $X$ by $\varphi X$ and $Y$ by $\varphi Y$ in \eqref{3.5}, we obtain
$Ric^*(\varphi X,\varphi Y)=\beta g(\varphi X,\varphi Y)$.
Now taking covariant differention with respect to $W$, we get $(\nabla_W Ric^*)(\varphi X,\varphi Y)=dr(W) g(\varphi X, \varphi Y)$. Therefore we have the following;
\begin{Cor}
	A (2n+1)-dimensional $\varphi$-conformally flat Sasakian manifold has $\eta$-parallel $*$-Ricci tensor if and only if the scalar curvature of the
	manifold is constant.
\end{Cor}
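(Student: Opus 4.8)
The plan is to exploit the $*$-$\eta$-Einstein identity \eqref{3.5} proved in Theorem 3.1, namely $Ric^*(X,Y)=\beta g(X,Y)-\beta\eta(X)\eta(Y)$ with $\beta=\frac{r-4n}{2n(2n-1)}$, and then to differentiate it covariantly after restricting the arguments to the $\varphi$-directions. First I would substitute $X\mapsto\varphi X$ and $Y\mapsto\varphi Y$ and use $\eta\circ\varphi=0$ from \eqref{2.1} to annihilate the $\eta$-term, obtaining the global tensor identity $Ric^*(\varphi X,\varphi Y)=\beta\,g(\varphi X,\varphi Y)$, valid for all $X,Y$ on $M$. This reduces the entire problem to understanding how $\beta$, and hence $r$, varies.

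Next I would regard \eqref{3.5} as the identity $Ric^*=\beta(g-\eta\otimes\eta)$ between $(0,2)$-tensors and apply $\nabla_W$. Since $\nabla g=0$, the Leibniz rule gives $\nabla_W Ric^*=(W\beta)(g-\eta\otimes\eta)-\beta\big((\nabla_W\eta)\otimes\eta+\eta\otimes(\nabla_W\eta)\big)$. Evaluating this $(0,2)$-tensor at the pair $(\varphi X,\varphi Y)$, every summand carrying an explicit factor $\eta(\varphi X)$ or $\eta(\varphi Y)$ drops out by \eqref{2.1}, and $(g-\eta\otimes\eta)(\varphi X,\varphi Y)=g(\varphi X,\varphi Y)$ by \eqref{2.2}, so that $(\nabla_W Ric^*)(\varphi X,\varphi Y)=(W\beta)\,g(\varphi X,\varphi Y)$, where $W\beta=\frac{1}{2n(2n-1)}dr(W)$ since $n$ is constant. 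This is exactly the relation recorded just before the corollary, up to the harmless constant $\frac{1}{2n(2n-1)}$.

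Finally I would read off the equivalence in both directions. If $r$ is constant then $dr=0$, the right-hand side vanishes, and $Ric^*$ is $\eta$-parallel. Conversely, assuming $(\nabla_W Ric^*)(\varphi X,\varphi Y)=0$ for all $W,X,Y$ forces $dr(W)\,g(\varphi X,\varphi Y)=0$ identically; choosing $X=Y$ with $\varphi X\neq 0$ makes $g(\varphi X,\varphi X)=\lvert\varphi X\rvert^2>0$, so $dr(W)=0$ for every $W$, i.e. $r$ is constant. This closes the \emph{if and only if}.

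The one genuine subtlety is the meaning of the symbol in step two: $(\nabla_W Ric^*)(\varphi X,\varphi Y)$ must be interpreted as the covariant-derivative tensor $\nabla_W Ric^*$ evaluated at the arguments $(\varphi X,\varphi Y)$, and not as $W$ applied to the scalar $Ric^*(\varphi X,\varphi Y)$ with the slots frozen. The tensor-first route discards the $\nabla_W\eta$ contributions cleanly through $\eta\circ\varphi=0$. Differentiating the scalar identity naively instead produces the extra terms $Ric^*(\nabla_W\varphi X,\varphi Y)+Ric^*(\varphi X,\nabla_W\varphi Y)$, which do in fact cancel against $\beta\,W\!\left[g(\varphi X,\varphi Y)\right]$ because $Ric^*(U,\varphi Y)=\beta g(U,\varphi Y)$ holds for \emph{every} $U$ (again by $\eta(\varphi Y)=0$); the tensor-first computation simply avoids having to spot this cancellation. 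Everything else is a direct consequence of \eqref{2.1}, \eqref{2.2}, and metric compatibility.
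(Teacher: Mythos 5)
Your proof is correct and follows essentially the same route as the paper: both substitute $\varphi X,\varphi Y$ into the $*$-$\eta$-Einstein identity \eqref{3.5}, differentiate covariantly to get $(\nabla_W Ric^*)(\varphi X,\varphi Y)=\frac{dr(W)}{2n(2n-1)}\,g(\varphi X,\varphi Y)$, and read off the equivalence from the non-degeneracy of $g(\varphi X,\varphi X)$ on the contact distribution. Your write-up is in fact slightly more careful than the paper's, which drops the harmless constant $\frac{1}{2n(2n-1)}$ and does not comment on the tensor-versus-scalar differentiation subtlety you flag, but these refinements do not constitute a different argument.
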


\section{Conformally Flat $*$-$\eta$-Einstein Sasakian Manifold}
Suppose $M$ is conformally flat Sasakian manifold, then from \eqref{3.1} we have
\begin{align}
\nonumber R(X.Y)Z=&\frac{1}{2n-1}\{Ric(Y,Z)X-Ric(X,Z)Y+g(Y,Z)QX\\
\label{4.1} &-g(X,Z)QY\}-\frac{r}{2n(2n-1)}\{g(Y,Z)X-g(X,Z)Y\}.
\end{align}
If we set $Y=Z=\xi\perp X$, we find $QX=\frac{r-2n}{2n}X$. From this equation, \eqref{4.1} becomes
\begin{align}
\label{4.2} R(X,Y)Z=\frac{r-4n}{2n(2n-1)}\{g(Y,Z)X-g(X,Z)Y\}.
\end{align}
By definition of $*$-Ricci tensor, direct computation yields
\begin{align}
\label{4.3} Ric^*(X,Y)=\frac{r-4n}{2n(2n-1)}g(\varphi X, \varphi Y).
\end{align}
Since M is a conformally flat Sasakian manifold, we have
the following equations from the definition of $Ric^*$ and equation \eqref{4.2}:
\begin{align*}
Ric^*(\varphi Y, \varphi X)&=\sum_{i=1}^{2n+1}R(\varphi Y, e_i, \varphi e_i, \varphi^2X)\\
&=\sum_{i=1}^{2n+1}\{-R(X,\varphi e_i,e_i,\varphi Y)+\eta(X)R(\varphi Y,e_i,\varphi e_i,\xi)\}\\
&=Ric^*(X,Y).
\end{align*}
If we set $Y=\varphi X$ such that $X$ is unit, we obatin $Ric^*(\varphi^2X,\varphi X)=Ric^*(X,\varphi X)$ which implies that $Ric^*(X,\varphi X)=0$. Thus, from the definition of $*$-$\eta$-Einstein and \eqref{4.3}, we obtain $a g(X,Y)+b \eta(X)\eta(Y)=\frac{r-4n}{2n(2n-1)}g(\varphi X,\varphi Y)$. If we choose $X=Y=\xi$, we find $a+b=0$. If we set $Y=X\perp \xi$ such that $X$ and $Y$ are units, we get
\begin{align}
\label{4.4} a=\frac{r-4n}{2n(2n-1)}=K(X,\varphi X).
\end{align} 
In\cite{OKU}, the author proved that every conformally flat Sasakian manifold has a constant curvature +1, that is, $R(X,Y)Z=g(Y,Z)X-g(X,Z)Y$. From this result and \eqref{4.2}, we find $r=2n(2n-1)+4n$. In view of \eqref{4.4}, we obtain $a=1$. Therefore we have the following:
\begin{Th}\label{t4.2}
	Let $M$ be a (2n+1)-dimensional conformally flat Sasakian manifold. If $M$ is $*$-$\eta$-Einstein, then it is of constant curvature +1.
\end{Th}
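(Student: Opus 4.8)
The plan is to turn the vanishing of the Weyl tensor into an explicit algebraic formula for $R$ and then let the rigidity of the Sasakian structure fix every free constant. First I would put $C=0$ in \eqref{3.1} to obtain \eqref{4.1}, and probe it along the Reeb direction: setting $Y=Z=\xi$ with $X\perp\xi$ and using $R(X,\xi)\xi=X$ from \eqref{2.4} together with $Ric(\xi,\xi)=2n$ and $Q\xi=2n\xi$ from \eqref{2.6}, I would solve the resulting identity for the Ricci operator and get $QX=\frac{r-2n}{2n}X$ on the contact distribution. With $Q\xi=2n\xi$ this already exhibits $M$ as $\eta$-Einstein, so $Ric$ is controlled by $r$ alone. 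Feeding this back into \eqref{4.1} writes $R$ as a constant multiple of $g(Y,Z)X-g(X,Z)Y$ plus $\eta$-dependent corrections; the point is that those corrections are annihilated upon contracting against $\varphi$ (because $\eta\circ\varphi=0$ and $g(e_i,\varphi e_i)=0$), so one obtains unconditionally $Ric^*(X,Y)=\frac{r-4n}{2n(2n-1)}\,g(\varphi X,\varphi Y)$, which is \eqref{4.3}.

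Next I would read the $*$-$\eta$-Einstein hypothesis against \eqref{4.3}. Writing $Ric^*(X,Y)=a\,g(X,Y)+b\,\eta(X)\eta(Y)$ and comparing, the choice $X=Y=\xi$ forces $a+b=0$ (since $\varphi\xi=0$), while a unit $X=Y\perp\xi$ gives $a=\frac{r-4n}{2n(2n-1)}$, which by \eqref{4.2} equals the sectional curvature $K(X,\varphi X)$ recorded in \eqref{4.4}. At this stage $M$ is displayed as having all such $\varphi$-sectional curvatures equal to the single constant $\frac{r-4n}{2n(2n-1)}$, but its numerical value is still open.

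To pin it down I would invoke Okumura's theorem \cite{OKU}: a conformally flat Sasakian manifold is necessarily of constant curvature $+1$, so $R(X,Y)Z=g(Y,Z)X-g(X,Z)Y$. Matching this against \eqref{4.2} forces $\frac{r-4n}{2n(2n-1)}=1$, hence $r=2n(2n-1)+4n=2n(2n+1)$, and therefore $a=1$, $b=-1$, i.e. $Ric^*=g^\varphi$. This simultaneously fixes the $*$-Ricci constants and delivers the asserted constant curvature $+1$.

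The main obstacle is that the passage to the pure constant-curvature form \eqref{4.2} is not purely algebraic: substituting the $\eta$-Einstein Ricci operator into \eqref{4.1} leaves $\eta$-correction terms that cancel only when $M$ is Einstein, i.e. when $r=2n(2n+1)$. The $\xi$-contractions by themselves do not force this, and indeed one checks that testing the resulting curvature against the Sasakian identity \eqref{2.5} holds identically in $r$, giving no new information. The genuine rigidity that upgrades $\eta$-Einstein to Einstein, fixing $r=2n(2n+1)$ and the sectional curvature to $1$, comes from the second-Bianchi content of conformal flatness encoded in Okumura's result; that is the step on which the conclusion truly rests. The $*$-$\eta$-Einstein hypothesis, by contrast, is already built into \eqref{4.3} and serves mainly to phrase the conclusion through the $*$-Ricci constants $a=1$, $b=-1$.
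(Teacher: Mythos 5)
Your proposal is correct and follows essentially the same route as the paper: conformal flatness plus the $\xi$-contraction gives $QX=\frac{r-2n}{2n}X$ and hence \eqref{4.3}, the $*$-$\eta$-Einstein ansatz then yields $a+b=0$ and $a=\frac{r-4n}{2n(2n-1)}$, and Okumura's theorem \cite{OKU} pins down $r=2n(2n-1)+4n$, giving $a=1$ and constant curvature $+1$. If anything, you are more careful than the paper itself: the paper asserts that \eqref{4.1} ``becomes'' \eqref{4.2} after substituting $QX=\frac{r-2n}{2n}X$, silently dropping the $\eta$-correction terms proportional to $2n(2n+1)-r$, whereas you correctly observe that these terms survive in general (vanishing only once $r=2n(2n+1)$, i.e.\ via Okumura) but are annihilated in the $\varphi$-contraction defining $Ric^*$, so that \eqref{4.3} --- the only consequence actually used --- holds unconditionally.
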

We know that every Riemannian manifold of constant sectional curvature is locally symmetric. From theorem \eqref{t4.2}, we have
\begin{Cor}
	A conformally flat $*$-$\eta$-Einstein Sasakian manifold is locally symmetric.
\end{Cor}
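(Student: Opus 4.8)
The plan is to obtain the corollary as an immediate consequence of Theorem \ref{t4.2} together with the classical fact that a space of constant sectional curvature has parallel curvature tensor. By Theorem \ref{t4.2}, a conformally flat $*$-$\eta$-Einstein Sasakian manifold $M$ is of constant curvature $+1$, so that
\[
R(X,Y)Z = g(Y,Z)X - g(X,Z)Y, \qquad X,Y,Z \in TM.
\]
Since a Riemannian manifold is locally symmetric precisely when $\nabla R = 0$, it remains only to verify that the right-hand side above is parallel.

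To carry this out I would apply $\nabla_W$ to the displayed expression and expand by the Leibniz rule. The scalar coefficient is the constant $1$ and the only tensor field involved is the metric $g$; because the Levi-Civita connection satisfies $\nabla g = 0$, every term arising from the differentiation vanishes identically. Hence $(\nabla_W R)(X,Y)Z = 0$ for all $W,X,Y,Z \in TM$, that is, $\nabla R = 0$, which is exactly the statement that $M$ is locally symmetric.

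No genuine obstacle arises in this argument: all of the geometric content has already been extracted in Theorem \ref{t4.2}, where the hypotheses of conformal flatness and the $*$-$\eta$-Einstein condition were combined with the result of \cite{OKU} to force constant curvature. The passage from constant curvature to $\nabla R = 0$ is then a one-line consequence of metric compatibility of $\nabla$. The only point worth stating explicitly is the definition of local symmetry as the vanishing of $\nabla R$, after which the conclusion is automatic.
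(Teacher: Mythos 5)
Your proof is correct and follows the same route as the paper: invoke Theorem \ref{t4.2} to get constant curvature $+1$, then use the classical fact that constant sectional curvature implies local symmetry. The only difference is that you spell out the one-line verification of that classical fact via $\nabla g = 0$, whereas the paper simply cites it as well known.
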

\begin{Th}
	A (2n+1)-dimensional conformally flat Sasakian manifold is $\varphi$-Einstein. 
\end{Th}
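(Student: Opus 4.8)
The plan is to leverage the computation already carried out for conformally flat Sasakian manifolds in this section, together with Okumura's rigidity result \cite{OKU}, and then simply read off the $\varphi$-Einstein condition from Definition \ref{d2.1}.

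First I would recall equation \eqref{4.3}, derived above under the standing assumption that $M$ is conformally flat: it states $Ric^*(X,Y)=\frac{r-4n}{2n(2n-1)}g(\varphi X,\varphi Y)=\beta\, g^\varphi(X,Y)$ with $\beta=\frac{r-4n}{2n(2n-1)}$. Since the tensor $g^\varphi(X,Y)=g(\varphi X,\varphi Y)$ is symmetric, this relation shows at once that $Ric^*$ is symmetric in this setting, whence its symmetric part coincides with itself: $Ric^\varphi=Ric^*=\beta\, g^\varphi$. At this stage $M$ is already weakly $\varphi$-Einstein; the only obstruction to the stronger $\varphi$-Einstein property is the constancy of the coefficient $\beta$, equivalently the constancy of the scalar curvature $r$.

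Next I would invoke the theorem of Okumura \cite{OKU}, used already in the proof of Theorem \ref{t4.2}, that every conformally flat Sasakian manifold has constant curvature $+1$, i.e.\ $R(X,Y)Z=g(Y,Z)X-g(X,Z)Y$. Comparing this with \eqref{4.2} (or contracting directly) yields the constant value $r=2n(2n+1)$. Substituting gives $\beta=\frac{2n(2n+1)-4n}{2n(2n-1)}=1$, so that $Ric^\varphi=g^\varphi$ with a constant proportionality factor. By Definition \ref{d2.1} this is exactly the assertion that $M$ is $\varphi$-Einstein.

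I do not anticipate a genuine obstacle here: the substantive work is done by \eqref{4.3} and by Okumura's constant-curvature theorem, both already available in this section. The only points that warrant care are the purely formal observation that symmetry of $g^\varphi$ promotes $Ric^*$ to $Ric^\varphi$, and the bookkeeping verification that the specific value $r=2n(2n+1)$ makes $\beta$ equal to the honest constant $1$ rather than merely some unspecified constant.
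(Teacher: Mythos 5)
Your proposal is correct and follows essentially the same route as the paper: the paper's one-line proof cites \eqref{4.3} and Definition \ref{d2.1}, with the constancy of $\beta$ implicitly supplied by Okumura's constant-curvature result and the value $r=2n(2n-1)+4n=2n(2n+1)$ already established just before Theorem \ref{t4.2}. You have merely made explicit the two points the paper leaves tacit, namely the symmetry of $g^\varphi$ (so that $Ric^\varphi=Ric^*$) and the computation $\beta=1$.
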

\begin{proof}
	The theorem follows from \eqref{4.3} and definition \eqref{d2.1}.
\end{proof}	

\section{$*$-Ricci Semi-symmetric Sasakian Manifold}
A contact metric manifold is called Ricci semi-symmetric if $R(X,Y)\cdot Ric=0$, for all $X,Y \in TM$. Analogous to this definition, we define $*$-Ricci semi-symmetric by $R(X,Y)\cdot Ric^*=0$.
\begin{Th}
	If a (2n+1)-dimensional Sasakian manifold $M$ is $*$-Ricci semi-symmetric, then $M$ is $*$-Ricci flat. Moreover, it is $\eta$-Einstein manifold and
	the Ricci tensor can be expressed as
	\begin{align*}
	Ric(X,Y)=(2n-1)g(X,Y)+\eta(X)\eta(Y).
	\end{align*}
\end{Th}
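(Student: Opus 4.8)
The plan is to reduce everything to the single statement that $Ric^*$ vanishes identically; once $*$-Ricci flatness is established, the two ``moreover'' assertions are immediate. Indeed, substituting $Ric^*=0$ into the Lemma's formula \eqref{2.9} gives at once
\begin{align*}
Ric(X,Y)=(2n-1)g(X,Y)+\eta(X)\eta(Y),
\end{align*}
which is precisely the claimed Ricci tensor and exhibits $M$ as $\eta$-Einstein (with $\alpha=2n-1$, $\gamma=1$). So the entire content lies in proving $Ric^*=0$.

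To exploit the hypothesis, I would first write the semi-symmetry condition $R(X,Y)\cdot Ric^*=0$ explicitly as the curvature derivation acting on the $(0,2)$-tensor $Ric^*$, namely
\begin{align*}
Ric^*(R(X,Y)U,V)+Ric^*(U,R(X,Y)V)=0,
\end{align*}
for all $X,Y,U,V\in TM$. The decisive move is to set $X=\xi$ and invoke \eqref{2.5}, which gives $R(\xi,Y)W=g(Y,W)\xi-\eta(W)Y$. Before substituting, I would record that $Ric^*$ vanishes whenever either argument is $\xi$: the second-slot vanishing $Ric^*(\cdot,\xi)=0$ is the remark already noted in Section~2, while the first-slot vanishing follows from \eqref{2.9} and \eqref{2.6}, since $Ric^*(\xi,V)=2n\eta(V)-(2n-1)\eta(V)-\eta(V)=0$.

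Substituting $R(\xi,Y)U=g(Y,U)\xi-\eta(U)Y$ and $R(\xi,Y)V=g(Y,V)\xi-\eta(V)Y$ and expanding by bilinearity, the two terms carrying $\xi$ in a slot of $Ric^*$ drop out by the vanishing identities above, leaving
\begin{align*}
\eta(U)\,Ric^*(Y,V)+\eta(V)\,Ric^*(U,Y)=0.
\end{align*}
I would then specialize $U=\xi$: using $\eta(\xi)=1$ and $Ric^*(\xi,Y)=0$, the second term vanishes and one is left with $Ric^*(Y,V)=0$ for all $Y,V$, that is, $M$ is $*$-Ricci flat. Feeding this back into \eqref{2.9} yields the stated Ricci tensor and the $\eta$-Einstein conclusion.

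I do not anticipate a genuine obstacle; the argument is a short substitution. The only points requiring care are the sign convention in expanding the derivation $R(X,Y)\cdot Ric^*$ on a $(0,2)$-tensor, and verifying the first-slot vanishing $Ric^*(\xi,\cdot)=0$, which (unlike $Ric^*(\cdot,\xi)=0$) is not stated verbatim in Section~2 but follows immediately from the Lemma. One should also confirm that the two $\xi$-carrying terms genuinely cancel rather than producing an extra constraint, and this cancellation hinges precisely on those two vanishing identities.
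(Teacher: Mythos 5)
Your proposal is correct and follows essentially the same route as the paper: substitute $\xi$ into the derivation identity $Ric^*(R(X,Y)U,V)+Ric^*(U,R(X,Y)V)=0$, use the vanishing of $Ric^*$ when $\xi$ occupies either slot to conclude $Ric^*=0$, and then read off the Ricci tensor from \eqref{2.9}. The only difference is cosmetic (the paper sets $X=Z=\xi$ at once, you set $X=\xi$ first and $U=\xi$ afterwards), and your explicit verification that $Ric^*(\xi,\cdot)=0$ via \eqref{2.9} and \eqref{2.6} is a point the paper leaves implicit.
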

\begin{proof}
	Let us consider (2n+1)-dimensional Sasakian manifold which satisfies the condition $R(X,Y).Ric^*=0$. Then we have
	\begin{align}
	\label{5.1} Ric^*(R(X,Y)Z,W)+Ric^*(Z,R(X,Y)W)=0.
	\end{align}	
	Putting $X=Z=\xi$ in \eqref{5.1}, we have
	\begin{align}
	\label{5.2} Ric^*(R(\xi, Y)\xi,W)+Ric^*(\xi,R(\xi,Y)W)=0.
	\end{align}	
	It is well known that $Ric^*(X,\xi)=0$. Making use of \eqref{2.4} in \eqref{5.2} and by virtue of last equation, we find
	\begin{align}
	\label{5.3} Ric^*(Y,W)=0, \qquad Y,W\in TM,
	\end{align}	
	showing that $M$ is $*$-Ricci flat. Moreover, in view of \eqref{5.3} and \eqref{2.9}, we have the required result.
\end{proof}

\section{Sasakian Manifold Admitting $*$-Ricci Soliton}

\par Ricci flows are intrinsic geometric flows on a Riemannian manifold,
whose fixed points are solitons and it was introduced by Hamilton\cite{HRS}. Ricci solitons also correspond to self-similar solutions of Hamilton's Ricci 
flow. They are natural generalization of Einstein metrics and is defined by
\begin{align}
\label{R1}(L_Vg)(X,Y)+2Ric(X,Y)+2\lambda g(X,Y)=0,
\end{align}
for some constant $\lambda$, a potential vector field $V$. The Ricci soliton is said to be shrinking, steady, and expanding according as $\lambda$ is negative, zero, and positive respectively.
\par The notion of $*$-Ricci soliton was introduced by George and Konstantina\cite{Geo}, in 2014, where they essentially modified the definition of Ricci soliton by replacing the Ricci tensor in \eqref{R1} with the $*$-Ricci tensor. Recently, the authors studied $*$-Ricci soliton on para-Sasakian manifold in the paper \cite{Prak} and obtain several interesting result. A Riemannian metric $g$ on $M$ is called $*$-Ricci soliton, if there is vector field $V$, such that
\begin{align}
\label{R2} (L_Vg)(X,Y)+2Ric^*(X,Y)+2\lambda g(X.Y)=0,
\end{align}
for all vector fields $X,Y$ on $M$. In this section we study a special type of metric called $*$-Ricci soliton on Sasakian manifold. Now we prove the following result:
\begin{Th}
	If the metric $g$ of a (2n+1)-dimensional Sasakian manifold $M(\varphi,\xi,\eta,g)$
	is a $*$-Ricci soliton with potential vector field $V$, then (i) $V$ is Jacobi along geodesic of $\xi$, (ii) M is an $\eta$-Einstein manifold and the Ricci tensor can be expressed as
	\begin{align}
	\label{6.1}	Ric(X,Y)=\left[2n-1-\frac{\lambda}{2}\right]g(X,Y)+\left[1+\frac{\lambda}{2}\right]\eta(X)\eta(Y).
	\end{align}
	
\end{Th}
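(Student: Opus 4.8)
The plan is to use the Sasakian structure equations together with the fact that the Reeb field $\xi$ is Killing, handling the two assertions by somewhat different means. At the outset I substitute the formula \eqref{2.9} for $Ric^*$ into the soliton equation \eqref{R2}, so that
\[
(L_Vg)(X,Y)=-2Ric(X,Y)+2[(2n-1)-\lambda]g(X,Y)+2\eta(X)\eta(Y),
\]
and I record the two consequences I will lean on: setting $Y=\xi$ and using $Ric^*(X,\xi)=0$ gives $(L_Vg)(X,\xi)=-2\lambda\eta(X)$, and the special case $X=Y=\xi$ gives $g(\nabla_\xi V,\xi)=-\lambda$.

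For (i) I would first note that the integral curves of $\xi$ are geodesics, since $\nabla_\xi\xi=-\varphi\xi=0$ by \eqref{a2.4}. Along such a curve the Jacobi equation for $V$ reads $\nabla_\xi\nabla_\xi V+R(V,\xi)\xi=0$, and by \eqref{2.4} and \eqref{2.5} one has $R(V,\xi)\xi=V-\eta(V)\xi=-\varphi^2V$. Writing $\zeta:=[\xi,V]=\nabla_\xi V-\nabla_V\xi=\nabla_\xi V+\varphi V$ and differentiating along $\xi$ with the help of $\nabla_\xi\varphi=0$ (immediate from \eqref{2.3}), a short computation collapses the Jacobi equation to the single identity $\nabla_\xi\zeta=\varphi\zeta$. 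The key observation is that $\zeta$ is Killing: applying $L_\xi$ to \eqref{R2} and using that $\xi$ is Killing, so that $L_\xi g=0$, $L_\xi\eta=0$ and $L_\xi\varphi=0$ (hence $L_\xi Ric^*=0$), yields $L_\xi L_Vg=0$, i.e. $L_{[\xi,V]}g=0$. For a Killing field a direct computation gives $\nabla_\xi\zeta=\varphi\zeta-\mathrm{grad}\,(\eta(\zeta))$, so it remains only to check that $\eta(\zeta)$ is constant; but $\eta(\zeta)=g(\nabla_\xi V,\xi)-g(\nabla_V\xi,\xi)=-\lambda-0=-\lambda$. Thus $\nabla_\xi\zeta=\varphi\zeta$, which is exactly the Jacobi condition, and (i) follows.

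For (ii), in view of \eqref{2.9} the goal is equivalent to proving $Ric^*=-\tfrac{\lambda}{2}g^\varphi$, that is, that $M$ is $*$-$\eta$-Einstein with this particular constant. The plan is to recover the intrinsic Ricci tensor from the existence of $V$ by differentiating the soliton equation. Concretely, I would take the covariant derivative of the displayed relation above and form the usual cyclic combination to express the Lie derivative of the connection, $g((L_V\nabla)(X,Y),Z)=-[(\nabla_XRic^*)(Y,Z)+(\nabla_YRic^*)(X,Z)-(\nabla_ZRic^*)(X,Y)]$, and then contract the associated formula for $L_VR$ to obtain $L_VRic$, equivalently $L_VRic^*$, purely in terms of $Ric^*$ and its covariant derivatives. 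Feeding in the Sasakian identities $\nabla_X\xi=-\varphi X$, $(\nabla_X\varphi)Y=g(X,Y)\xi-\eta(Y)X$ and $Q\xi=2n\xi$, together with $Ric^*(\cdot,\xi)=0$ and the relations already secured in (i) (namely $(L_Vg)(X,\xi)=-2\lambda\eta(X)$ and $\zeta$ Killing), one solves the resulting algebraic constraints for $Ric^*$ and arrives at \eqref{6.1}.

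The main obstacle is precisely this last step. The soliton equation is a single pointwise tensor identity coupling the unknown field $V$ to the curvature, so extracting an intrinsic curvature statement such as the $\eta$-Einstein condition requires the integrability conditions of that identity rather than the identity by itself. The delicate point is that the soliton equation determines only the symmetric part of $\nabla V$ and is blind to its curl, so controlling the antisymmetric part along the Reeb flow is what forces the simultaneous use of the Lie-derivative-of-connection formalism and the Sasakian structure equations; I expect the bookkeeping in that elimination of $V$ to be the technical heart of the proof.
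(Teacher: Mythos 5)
Your part (i) is correct and proceeds by a genuinely different, and in fact cleaner, route than the paper. The paper reaches the Jacobi equation through the Lie-derivative-of-connection machinery: it computes $(L_V\nabla)(X,\xi)=-2Q\varphi X+2(2n-1)\varphi X$ from the cyclic formula and the Sasakian identities \eqref{6.7}, deduces $(L_V\nabla)(\xi,\xi)=0$, and reads off $\nabla_\xi\nabla_\xi V+R(V,\xi)\xi=0$ from Yano's formula $(L_V\nabla)(X,Y)=\nabla_X\nabla_YV-\nabla_{\nabla_XY}V+R(V,X)Y$. You instead observe that $L_\xi$ annihilates every term of the soliton equation \eqref{R2} (since $\xi$ is Killing, $L_\xi g=0$ and, by \eqref{2.9}, $L_\xi Ric^*=0$), so that $\zeta=[\xi,V]$ is Killing; the Killing identity $g(\nabla_\xi\zeta,X)=-g(\nabla_X\zeta,\xi)$ together with $\nabla_X\xi=-\varphi X$ gives $\nabla_\xi\zeta=\varphi\zeta-\mathrm{grad}\,(\eta(\zeta))$, and $\eta(\zeta)=g(\nabla_\xi V,\xi)=-\lambda$ is constant by the soliton equation evaluated at $(\xi,\xi)$. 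Since $\nabla_\xi\varphi=0$, the identity $\nabla_\xi\zeta=\varphi\zeta$ is indeed equivalent to the Jacobi equation. Every step checks out, and this argument buys real economy: it avoids the $(L_V\nabla)$ computations entirely for part (i).

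Part (ii), however, has a genuine gap: what you present is a strategy (essentially the paper's own), not a proof, and the step you defer as ``bookkeeping'' is exactly where the essential idea lives. Your plan generates only one relation: the cyclic formula expresses $g((L_V\nabla)(X,Y),Z)$ through $\nabla Ric^*$, and the commutation formula \eqref{c1} then expresses $L_VR$ (or, after contraction, $L_VRic$) through intrinsic second-order data. But $L_VR$ and $L_VRic$ are themselves unknown --- they involve the potential $V$ --- so this is one equation in two unknowns, and there are no ``algebraic constraints'' to solve. Two further inputs are needed, both absent from your outline. First, one must specialize to $\xi$-arguments (set $Y=\xi$, then $Z=\xi$) so that the identities \eqref{6.7} kill the $\nabla Q$ terms and produce the intrinsic formula $(L_VR)(X,\xi)\xi=4\{QX-\eta(X)\xi-(2n-1)X\}$ of \eqref{6.12}; a blind contraction over $Z$ does not remove the derivative terms (the paper only performs that contraction, in \eqref{6.22}, after \eqref{6.1} is already established). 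Second --- the key closing idea --- one needs an \emph{independent} evaluation of the same quantity $(L_VR)(X,\xi)\xi$, which the paper obtains by Lie-differentiating the Sasakian curvature identity $R(X,\xi)\xi=X-\eta(X)\xi$ along $V$ and feeding in the constraints $(L_V\eta)(X)-g(X,L_V\xi)=-2\lambda\eta(X)$ and $\eta(L_V\xi)=\lambda$ that follow from the soliton equation, as in \eqref{6.13} and \eqref{6.14}. Equating the two expressions for $(L_VR)(X,\xi)\xi$ is what eliminates $V$ and solves algebraically for $Q$, yielding \eqref{6.1}. Without that second computation your scheme cannot terminate, so part (ii) remains unproved as written.
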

\begin{proof}
	Call the following commutation formula (see \cite{YK}, page 23): 	
	\begin{align}
	(L_V\nabla_X g-\nabla_X L_V g-\nabla_{[V,X]}g)(Y,Z)=-g((L_V\nabla)(X,Y),Z)-g((L_V \nabla)(X,Z),Y),
	\end{align}
	and is well known for all vector fields $X,Y,Z$ on $M$. Since $g$ is parallel with respect to Levi-Civita connection $\nabla$, then the above relation becomes
	\begin{align}
	\label{6.2}(\nabla_XL_V g)(Y,Z)=g((L_V \nabla)(X,Y),Z)+g((L_V \nabla)(X,Y),Z).
	\end{align} 
	Since $L_V\nabla$ is a symmetric tensor of type $(1,2)$, i.e., $(L_V \nabla)(X,Y)=(L_V\nabla)(Y,Z)$, it follows from \eqref{6.2} that
	\begin{align}
	\label{6.3}g((L_V\nabla)(X,Y),Z)=\frac{1}{2}\{(\nabla_XL_V g)(Y,Z)+(\nabla_YL_V g)(Z,X)-(\nabla_ZL_Vg)(X,Y)\}.
	\end{align}
	Next, taking covariant differentiation of $*$-Ricci soliton equation \eqref{R2} along a vector field $X$, we obatin $(\nabla_X L_V g)(X,Y)=-2(\nabla_X Ric^*)(X,Y)$. Substitutimg this relation into \eqref{6.3} we have
	\begin{align}
	\label{6.4} g((L_V\nabla)(X,Y),Z)=(\nabla_Z Ric^*)(X,Y)-(\nabla_X Ric^*)(Y,Z)-(\nabla_Y Ric^*)(X,Z).
	\end{align}
	Again, taking covariant differentiation of \eqref{2.9} with respect to $Z$, we get
	\begin{align}
	\label{6.5} (\nabla_Z Ric^*)(X,Y)=(\nabla_Z Ric)(X,Y)-\{g(Z,\varphi X)\eta(Y)+g(Z,\varphi Y)\eta(X)\}.
	\end{align}
	Combining \eqref{6.5} with \eqref{6.4}, we find
	\begin{align}
	\nonumber g((L_V\nabla)(X,Y),Z)=&(\nabla_Z Ric)(X,Y)-(\nabla_X Ric)(Y,Z)-(\nabla_Y Ric)(X,Z)\\
	\label{6.6} &+2g(X,\varphi Z)\eta(Y)+2g(Y,\varphi Z)\eta(X).
	\end{align}
	In Sasakian manifold we know the following relation\cite{AG}:
	\begin{align}
	\label{6.7} \nabla_\xi Q=Q\varphi-\varphi Q=0, \qquad (\nabla_X Q)\xi=Q\varphi X-2n\varphi X.
	\end{align}
	Replacing $Y$ by $\xi$ in \eqref{6.6} and then using \eqref{6.7} we obtain
	\begin{align}
	\label{6.8} (L_V \nabla)(X,\xi)=-2Q\varphi X+2(2n-1)\varphi X.
	\end{align}
	From the above equation, we have
	\begin{align}
	\label{6.9} (L_V\nabla)(\xi,\xi)=0.
	\end{align}
	Now, substituting $X=Y=\xi$ in the well known formula \cite{YK}:
	\begin{align*}
	(L_V \nabla)(X,Y)=\nabla_X\nabla_Y V-\nabla_{\nabla_X Y}V+R(V,X)Y,
	\end{align*}
	and then making use of equation \eqref{6.9} we obtain
	\begin{align*}
	\nabla_\xi\nabla_\xi V+R(V,\xi)\xi=0,
	\end{align*}
	which proves part (i).
	Further, differentiating \eqref{6.8} covariantly along an arbitrary vector field $Y$ on $M$ and then using \eqref{2.3} and last equation of \eqref{2.6}, we obtain
	\begin{align}
	\nonumber &(\nabla_Y L_V \nabla)(X,\xi)-(L_V\nabla)(X,\varphi Y)\\
	\label{6.10} &=2\{-(\nabla_Y Q)\varphi X-g(X,Y)\xi+\eta(X)QY-(2n-1)\eta(X)Y\}.
	\end{align}
	According to Yano\cite{YK}, we have the following commutation formula:
	\begin{align}
	\label{c1} (L_V R)(X,Y)Z=(\nabla_X L_V \nabla)(Y,Z)-(\nabla_YL_V\nabla)(X,Z).
	\end{align}
	Substituting $\xi$ for $Z$ in the foregoing equation and in view of \eqref{6.10}, we obtain
	\begin{align}
	\nonumber &(L_V R)(X,Y)\xi-(L_V\nabla)(Y,\varphi X)+(L_V\nabla)(X,\varphi Y)\\
	\label{6.11} &=2\{(\nabla_Y Q)\varphi X-(\nabla_X Q)\varphi Y+\eta(Y)QX-\eta(X)QY+(2n-1)(\eta(X)Y-\eta(Y)X)\}.
	\end{align}
	Replacing $Y$ by $\xi$ in \eqref{6.11} and then using last equation of \eqref{2.6}, \eqref{6.7} and \eqref{6.8}, we have
	\begin{align}
	\label{6.12} (L_V R)(X,\xi)\xi=4\{QX-\eta(X)\xi-(2n-1)X\}.
	\end{align}
	Since $Ric^*(X,\xi)=0$, then $*$-Ricci soliton equation \eqref{R2} gives $(L_V g)(X,\xi)+2\lambda\eta(X)=0$, which gives
	\begin{align}
	\label{6.13} (L_V\eta)(X)-g(X,L_V\xi)+2\lambda\eta(X)=0.
	\end{align}
	Lie-derivative of $g(\xi,\xi)=1$ along $V$ gives $\eta(L_V \xi)=\lambda$. Next, Lie-differentiating the formula $R(X,\xi)\xi=X-\eta(X)\xi$ along $V$ and then by virtue of last equation, we obtain
	\begin{align}
	\label{6.14} (L_V R)(X,\xi)\xi-g(X,L_V\xi)\xi+2\lambda X=-((L_V\eta)X)\xi.
	\end{align}
	Combining \eqref{6.14} with \eqref{6.12}, and making use of \eqref{6.13}, we obtain part (ii). This completes the proof.
\end{proof}
By virtue of \eqref{2.9} and \eqref{6.1}, the $*$-Ricci soliton equation \eqref{R2} takes the form
\begin{align}
\label{617}(L_V g)(X,Y)=-\lambda\{g(X,Y)+\eta(X)\eta(Y)\}.
\end{align}
Now, differentiating this equation covariantly along an arbitrary vector field $Z$ on $M$, we have
\begin{align}
\label{6.17} (\nabla_Z L_V g)(X,Y)=-\lambda\{g(Z,\varphi X)\eta(Y)+g(Z,\varphi Y)\eta(X)\}.
\end{align}
Substitute this equation in commutation formula \eqref{6.3}, we find
\begin{align}
\label{6.18} (L_V\nabla)(X,Y)=\lambda\{\eta(Y)\varphi X+\eta(X)\varphi Y\}.
\end{align}
Taking covariant differentiation of \eqref{6.18} along a vector field $Z$ and then using \eqref{2.3}, we obtain
\begin{align}
\nonumber (\nabla_Z L_V\nabla)(X,Y)=&\lambda\{g(Z,\varphi Y)\varphi X+g(Z,\varphi X)\varphi Y+g(X,Z)\eta(Y)\xi\\
\label{6.19} &+g(Y,Z)\eta(X)\xi-2\eta(X)\eta(Y)Z\}.
\end{align}
Making use of \eqref{6.19} in commutation formula \eqref{c1}, we have
\begin{align}
\nonumber (L_V R)(X,Y)Z=&\lambda\{g(X,\varphi Z)\varphi Y+2g(X,\varphi Y)\varphi Z-g(Y,\varphi Z)\varphi X+g(X,Z)\eta(Y)\xi\\
\label{6.21} &-g(Y,Z)\eta(X)\xi+2\eta(X)\eta(Z)Y-2\eta(Y)\eta(Z)X\}.
\end{align}
Contracting \eqref{6.21} over $Z$, we have
\begin{align}
\label{6.22} (L_V Ric)(Y,Z)=2\lambda\{g(Y,Z)-(2n+1)\eta(Y)\eta(Z)\}.
\end{align}
On other hand, taking Lie-differentiation of \eqref{6.1} along the vector field $V$ and using \eqref{617}, we obtain
\begin{align}
\nonumber (L_V Ric)(Y,Z)=&\left[1+\frac{\lambda}{2}\right]\{(L_V\eta)(Y)\eta(Z)+
\eta(Y)(L_V\eta)(Z)\}\\
\label{6.23} &-\lambda\left[2n-1-\frac{\lambda}{2}\right]\{g(Y,Z)+\eta(Y)\eta(Z)\}.
\end{align} 
Comparison of \eqref{6.22} and \eqref{6.23} gives
\begin{align}
\nonumber &\left[1+\frac{\lambda}{2}\right]\{(L_V\eta)(Y)\eta(Z)+
\eta(Y)(L_V\eta)(Z)\} -\lambda\left[2n-1-\frac{\lambda}{2}\right]\{g(Y,Z)+\eta(Y)\eta(Z)\}\\
\label{6.24} &=2\lambda\{g(Y,Z)-(2n+1)\eta(Y)\eta(Z)\}.
\end{align}
Taking $Y$ by $\xi$ in the foregoing equation, we get
\begin{align}
\label{6.25} \left[1+\frac{\lambda}{2}\right](L_V\eta)(Y)=-\left[\lambda+\frac{\lambda^2}{2}\right]\eta(Y).
\end{align}
Substitute \eqref{6.25} in \eqref{6.24} and then replacing $Z$ by $\varphi Z$, we obtain
\begin{align}
\lambda\left[2n+1-\frac{\lambda}{2}\right]g(Y,\varphi Z)=0.
\end{align}
Since $g(Y,\varphi Z)$ is non-vanishing everywhere on $M$, thus we have either $\lambda=0$, or $\lambda=2(2n+1)$.\\
\textbf{Case I:} If $\lambda=0$, then from \eqref{617} we can see that $L_V g=0$, i.e., $V$ is Killing. From \eqref{6.1}, we have
\begin{align*}
Ric(X,Y)=(2n-1)g(X,Y)+\eta(X)\eta(Y).
\end{align*}
This shows that $M$ is $\eta$-Einstein manifold with scalar curvature $r=2n(\alpha+1)=4n^2$.\\
\textbf{Case II:} If $\lambda=2(2n+1)$, then plugging $Y$ by $\varphi Y$ in \eqref{6.25} we have the relation $\left[1+\frac{\lambda}{2}\right](L_V\eta)(\varphi Y)=0$. Since $\lambda=2(2n+1)$, by virtue of last equation we have $\lambda\neq-2$, thus we must have $(L_V\eta)(\varphi Y)=0$. Replacing $Y$ by $\varphi Y$ in the foregoing equation and then using \eqref{2.1}, we have
\begin{align}
\label{6.27} (L_V\eta)(Y)=-2(2n+1)\eta(Y).
\end{align}
This shows that $V$ is a non-strict infinitesimal contact transformation. Now, substituting $Z$ by $\xi$ in \eqref{617} and using \eqref{6.27} we immediately get $L_V\xi=2(2n+1)\xi$. Using this in the commutation formula  (see \cite{YK}, page 23)
\begin{align*}
L_V\nabla_X\xi-\nabla_XL_V\xi-\nabla_{[V,X]}\xi=(L_V\nabla)(X,\xi),
\end{align*}
for an arbitrary vector field $X$ on $M$ and in view of \eqref{a2.4} and \eqref{6.18} gives $L_V\varphi=0$. Thus, the vector field $V$ leaves the structure tensor $\varphi$ invariant.
\par Other hand, using $\lambda=2(2n+1)$ in \eqref{6.1} if follows that
\begin{align*}
Ric(X,Y)=-2g(X,Y)+2(n+1)\eta(X)\eta(Y),
\end{align*}
showing that $M$ is $\eta$-Einstein with $\alpha=-2$. Thus $M$ is a  $D$-homothetically fixed. In\cite{Cha}, the authors give a wonderful information on $\eta$-Einstein Sasakian geometry; in this paper authors says, when $M$ is null (transverse Calabi-Yau) then always $(\alpha,\gamma)=(-2, 2n+2)$ (see page 189). By this we conclude that $M$ is a $D$-homothetically fixed null $\eta$-Einstein manifold. Therefore, we have the following;
\begin{Th}
	Let $M$ be a (2n+1)-dimensional Sasakian manifold. If $M$ admits $*$-Ricci soliton, then either $V$ is Killing, or	$M$ is $D$-homothetically fixed null $\eta$-Einstein manifold. In the first case, $M$ is $\eta$-Einstein manifold of constant scalar curvature $r=2n(\alpha+1)=4n^2$ and in second case, $V$ is a non-strict infinitesimal contact transformation and leaves the structure tensor $\varphi$ invariant. 
\end{Th}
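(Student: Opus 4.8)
The plan is to take as input the conclusion of the preceding theorem, which already guarantees that $M$ is $\eta$-Einstein with Ricci tensor \eqref{6.1}, and to extract from the soliton data a scalar constraint on $\lambda$. First I would substitute \eqref{6.1} together with the expression \eqref{2.9} for the $*$-Ricci tensor into the soliton equation \eqref{R2}; the metric terms combine and one is left with the clean identity $(L_V g)(X,Y)=-\lambda\{g(X,Y)+\eta(X)\eta(Y)\}$, which is \eqref{617}. Differentiating this covariantly along $Z$ and using $\nabla_X\xi=-\varphi X$ from \eqref{a2.4} expresses $\nabla_Z L_V g$ purely in terms of $\varphi$ and $\eta$.

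Next I would run this through the Yano commutation machinery. Feeding $\nabla_Z L_V g$ into the symmetrized formula \eqref{6.3} yields $L_V\nabla$ explicitly as $(L_V\nabla)(X,Y)=\lambda\{\eta(Y)\varphi X+\eta(X)\varphi Y\}$, i.e. \eqref{6.18}. A further covariant differentiation, invoking the Sasakian identity \eqref{2.3}, gives $\nabla_Z L_V\nabla$, and plugging this into the second commutation formula \eqref{c1} delivers the full $(L_V R)(X,Y)Z$. Contracting over $Z$ then produces $(L_V Ric)(Y,Z)=2\lambda\{g(Y,Z)-(2n+1)\eta(Y)\eta(Z)\}$.

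The decisive step is to evaluate $L_V Ric$ a second, independent way, now by Lie-differentiating the $\eta$-Einstein formula \eqref{6.1} directly and eliminating $L_V g$ via \eqref{617}; comparing the two expressions for $L_V Ric$ produces a single tensorial relation involving $L_V\eta$ and $\lambda$. Setting $Y=\xi$ isolates $L_V\eta$, using $\eta(L_V\xi)=\lambda$ obtained by Lie-differentiating $g(\xi,\xi)=1$, and reinserting this and replacing $Z$ by $\varphi Z$ collapses everything to $\lambda[2n+1-\tfrac{\lambda}{2}]g(Y,\varphi Z)=0$. Since $g(\cdot,\varphi\cdot)$ is nonzero on $M$, I conclude the dichotomy $\lambda=0$ or $\lambda=2(2n+1)$. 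I expect this double evaluation of $L_V Ric$ and its comparison to be the main obstacle, as it requires the two computations to be carried out consistently through the two commutation formulas.

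Finally I would dispose of the two cases. For $\lambda=0$, equation \eqref{617} forces $L_V g=0$, so $V$ is Killing, and \eqref{6.1} reduces to $Ric(X,Y)=(2n-1)g(X,Y)+\eta(X)\eta(Y)$, whose trace gives $r=2n(\alpha+1)=4n^2$. For $\lambda=2(2n+1)$, the relation for $L_V\eta$ yields $(L_V\eta)(Y)=-2(2n+1)\eta(Y)$, identifying $V$ as a non-strict infinitesimal contact transformation; substituting $Z=\xi$ in \eqref{617} gives $L_V\xi=2(2n+1)\xi$, and feeding this together with \eqref{a2.4} and \eqref{6.18} into the commutation formula $L_V\nabla_X\xi-\nabla_X L_V\xi-\nabla_{[V,X]}\xi=(L_V\nabla)(X,\xi)$ gives $L_V\varphi=0$. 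Here \eqref{6.1} has $\alpha=-2$, so $M$ is $D$-homothetically fixed, and invoking the cited null classification $(\alpha,\gamma)=(-2,2n+2)$ completes the identification as a $D$-homothetically fixed null $\eta$-Einstein manifold.
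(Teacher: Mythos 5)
Your proposal follows essentially the same route as the paper's own argument: deriving \eqref{617}, then \eqref{6.18} via the Yano commutation formula \eqref{6.3}, computing $(L_V R)$ through \eqref{c1} and contracting to get \eqref{6.22}, comparing with the Lie derivative of \eqref{6.1} to force $\lambda=0$ or $\lambda=2(2n+1)$, and then treating the two cases exactly as the paper does (Killing vector field with $r=4n^2$, versus non-strict infinitesimal contact transformation with $L_V\varphi=0$ and the $D$-homothetically fixed null $\eta$-Einstein identification). The plan is correct and complete in its essential steps, including the auxiliary fact $\eta(L_V\xi)=\lambda$ needed when setting $Y=\xi$.
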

Now, we prove the following result, which gives some remark on $*$-Ricci soliton.
\begin{Th}
	Let $M$ be a (2n+1)-dimensional Sasakian manifold admitting $*$-Ricci soliton with $Q^*\varphi=\varphi Q^*$. Then the soliton vector field $V$ leaves the structure tensor $\varphi$ invariant if and only if $g(\varphi(\nabla_V \varphi)X,Y)=(dv)(X,Y)-(dv)(\varphi X,\varphi Y)-(dv)(X,\xi)\eta(Y)$. 
\end{Th}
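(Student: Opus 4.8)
The plan is to read the stated identity as an assertion about $\varphi(L_V\varphi)$, obtained by feeding the $*$-Ricci soliton equation into the Levi-Civita expression for the Lie derivative of $\varphi$.

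First I would record the general identity
\[
(L_V\varphi)X=(\nabla_V\varphi)X-\nabla_{\varphi X}V+\varphi(\nabla_X V),
\]
and decompose $\nabla V$ into its symmetric and skew parts. Writing $v=g(V,\cdot)$ and using the normalisation $(dv)(X,Y)=\tfrac12\{g(\nabla_X V,Y)-g(\nabla_Y V,X)\}$, one has $g(\nabla_X V,Y)=\tfrac12(L_Vg)(X,Y)+(dv)(X,Y)$, so the symmetric part of $\nabla V$ is carried by $L_Vg$ and the skew part by the two-form $dv$. I would encode the latter by the skew-adjoint operator $F$ with $g(FX,Y)=(dv)(X,Y)$, noting the two reformulations $(dv)(\varphi X,\varphi Y)=-g(\varphi F\varphi X,Y)$ and $(dv)(X,\xi)=\eta(FX)$ that will be needed at the end.

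Next I would substitute the soliton. From \eqref{R2}, $\tfrac12(L_Vg)(X,Y)=-Ric^*(X,Y)-\lambda g(X,Y)=-g(Q^*X,Y)-\lambda g(X,Y)$, hence $\nabla_X V=-Q^*X-\lambda X+FX$. Putting this into the identity above, the two $\lambda\varphi X$ terms cancel, the $F$-terms assemble into $\varphi F-F\varphi$, and the curvature contribution appears precisely as the commutator $Q^*\varphi-\varphi Q^*$. This is the only place the hypothesis enters: $Q^*\varphi=\varphi Q^*$ annihilates that commutator and leaves
\[
(L_V\varphi)X=(\nabla_V\varphi)X+(\varphi F-F\varphi)X.
\]
Applying $\varphi$, using $\varphi^2=-\mathrm{id}+\eta\otimes\xi$ and $\eta\circ\varphi=0$ from \eqref{2.1}, I would bring this to $\varphi(\nabla_V\varphi)X=\varphi(L_V\varphi)X+FX+\varphi F\varphi X-\eta(FX)\xi$, and pairing with $Y$ while translating the $F$-terms back into $dv$ gives
\[
g(\varphi(\nabla_V\varphi)X,Y)=g(\varphi(L_V\varphi)X,Y)+(dv)(X,Y)-(dv)(\varphi X,\varphi Y)-(dv)(X,\xi)\eta(Y).
\]
Thus the displayed identity is equivalent to $g(\varphi(L_V\varphi)X,Y)=0$ for all $X,Y$, i.e. to $\varphi(L_V\varphi)=0$; since $L_V\varphi=0$ trivially forces $\varphi(L_V\varphi)=0$, the forward implication of the theorem is immediate.

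The step I expect to be the main obstacle is the converse. From $\varphi(L_V\varphi)=0$ one only learns that $(L_V\varphi)X$ lies in $\ker\varphi=\mathbb{R}\xi$, so to reach $L_V\varphi=0$ one must still show $\eta((L_V\varphi)X)=0$. Feeding the Sasakian identity $(\nabla_V\varphi)X=g(V,X)\xi-\eta(X)V$ from \eqref{2.3}, together with $Ric^*(\cdot,\xi)=0$ and $\nabla_X\xi=-\varphi X$ from \eqref{a2.4}, into the $\xi$-component reduces it to $\eta((L_V\varphi)X)=g(\varphi V,\varphi X)-(dv)(\varphi X,\xi)$; since the soliton constrains only the symmetric part of $\nabla V$ and leaves $dv$ free, controlling this last term is exactly the delicate point, and it is here that one must argue the vanishing of the $\xi$-component in order to promote $\varphi(L_V\varphi)=0$ to the full invariance $L_V\varphi=0$.
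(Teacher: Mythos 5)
Your computation reproduces the paper's own proof essentially step for step: the paper also writes the soliton equation as in \eqref{R2}, encodes the skew part of $\nabla V$ by an operator $F$ with $(dv)(X,Y)=g(X,FY)$ (the negative of your $F$, which is immaterial), obtains $\nabla_X V=-Q^*X-\lambda X-FX$, invokes $Q^*\varphi=\varphi Q^*$ to kill the commutator, and arrives at exactly your identity
\begin{align*}
g(\varphi(\nabla_V\varphi)X,Y)=g(\varphi(L_V\varphi)X,Y)+(dv)(X,Y)-(dv)(\varphi X,\varphi Y)-(dv)(X,\xi)\eta(Y),
\end{align*}
so that the stated condition is equivalent to $\varphi\circ(L_V\varphi)=0$. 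The obstacle you flag in the converse---that this only forces $(L_V\varphi)X\in\ker\varphi=\mathbb{R}\xi$, not $L_V\varphi=0$---is a genuine gap in your proposal as it stands, but you should know it is equally a gap in the paper: its closing sentence silently identifies $\varphi(L_V\varphi)=0$ with $L_V\varphi=0$ without any argument, which is precisely the point you could not settle.

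The missing step can in fact be supplied, so your argument (and the paper's) can be completed. Suppose $\varphi((L_V\varphi)X)=0$ for all $X$. Evaluate at $X=\xi$: since $\varphi\xi=0$ one has $(L_V\varphi)\xi=-\varphi(L_V\xi)$, hence
\begin{align*}
0=\varphi(L_V\varphi)\xi=-\varphi^{2}(L_V\xi)=L_V\xi-\eta(L_V\xi)\xi,
\end{align*}
so $L_V\xi$ is collinear with $\xi$ and $\varphi(L_V\xi)=0$. In your notation, combining this with $\nabla_\xi V=-Q^*\xi-\lambda\xi+F\xi=-\lambda\xi+F\xi$ (note $Q^*\xi=0$ because $Ric^*(\xi,\cdot)=0$ by \eqref{2.9}) and $\nabla_V\xi=-\varphi V$ from \eqref{a2.4} gives $F\xi=-\varphi V$, i.e.\ exactly the identity $(dv)(\varphi X,\xi)=g(\varphi V,\varphi X)$ that you identified as the delicate point. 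More directly: since $\eta\circ\varphi=0$, one has $\eta((L_V\varphi)X)=-(L_V\eta)(\varphi X)=-(L_Vg)(\varphi X,\xi)-g(\varphi X,L_V\xi)$; the second term vanishes because $L_V\xi$ is collinear with $\xi$, and the first vanishes because the soliton equation \eqref{R2} gives $(L_Vg)(\varphi X,\xi)=-2Ric^*(\varphi X,\xi)-2\lambda g(\varphi X,\xi)=0$. Thus $(L_V\varphi)X$ lies in $\mathbb{R}\xi$ and has zero $\xi$-component, so $L_V\varphi=0$. Appending this paragraph turns your proposal into a complete proof, and it simultaneously repairs the paper's.
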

\begin{proof}
	The $*$-Ricci soliton equation can be written as 
	\begin{align}
	\label{6.28} g(\nabla_XV,Y)+g(\nabla_YV,X)+2Ric^*(X,Y)+2\lambda g(X,Y)=0.
	\end{align}	
	Suppose $v$ is 1-form, metrically equivalent to $V$ and is given by $v(X) = g(X, V)$, for any arbitrary vector field $X$, then the exterior derivative $dv$ of $v$ is given by
	\begin{align}
	\label{6.29} 2(dv)(X,Y)=g(\nabla_X V,Y)-g(\nabla_YV,X)
	\end{align}
	As $dv$ is a skew-symmetric, if we define a tensor fieeld $F$ of type (1, 1) by
	\begin{align}
	(dv)(X,Y)=g(X,FY),
	\end{align}
	then $F$ is skew self-adjoint i.e., $g(X, FY)=-g(FX, Y)$. The equation \eqref{6.29} takes the form $2g(X,FY)=g(\nabla_X V,Y)-g(\nabla_YV,X)$. Adding it to equation \eqref{6.28} side by side and factoring out $Y$ gives
	\begin{align}
	\label{6.31}\nabla_XV=-Q^*X-\lambda X-FX,
	\end{align}
	where $Q^*$ is $*$-Ricci operator. Applying $\varphi$ on \eqref{6.31}, we have
	\begin{align}
	\label{6.32} \varphi\nabla_XV=-\varphi Q^*X-\varphi\lambda X-\varphi FX.
	\end{align}
	Next, Replacing $X$ by $\varphi X$ in \eqref{6.31}, we obtain
	\begin{align}
	\label{6.33} \nabla_{\varphi X}V=-Q^*\varphi X-\lambda\varphi X-F\varphi X.
	\end{align}
	Substracting \eqref{6.32} and \eqref{6.33}, we have
	\begin{align}
	\varphi\nabla_X V-\nabla_{\varphi X}V=(Q^*\varphi-\varphi Q^*)X+(F\varphi-\varphi F)X.
	\end{align}
	By our hypothesis, noting that $\varphi$ commutes with the $*$-Ricci operator $Q^*$ for Sasakian manifold, we have
	\begin{align}
	\label{6.35}\varphi\nabla_X V-\nabla_{\varphi X}V=(F\varphi-\varphi F)X.
	\end{align}
	Now, we note that
	\begin{align*}
	(L_V\varphi)X&=L_V\varphi X-\varphi L_VX\\
	&=\nabla_V\varphi X-\nabla_{\varphi X}V-\varphi\nabla_VX+\varphi\nabla_XV\\
	&=(\nabla_V\varphi)X-\nabla_{\varphi X}V+\varphi\nabla_XV.
	\end{align*}
	The use of foregoing equation in \eqref{6.35} gives
	\begin{align}
	\label{6.36}(L_V\varphi)X-(\nabla_V\varphi)X=(F\varphi-\varphi F)X.
	\end{align}
	Operating $\varphi$ on both sides of the equation \eqref{6.35} and then making use of \eqref{2.1}, \eqref{6.31} and \eqref{6.33}, we find
	\begin{align*}
	(dv)(\varphi X,\varphi Y)-(dv)(X,Y)+(dv)(X,\xi)\eta(Y)=g(\varphi(F\varphi-\varphi F)X,Y).
	\end{align*}
	Using \eqref{6.36} in the above equation provides
	\begin{align*}
	(dv)(\varphi X,\varphi Y)-(dv)(X,Y)+(dv)(X,\xi)\eta(Y)=g(\varphi(L_V\varphi)X-\varphi(\nabla_V\varphi)X, Y).
	\end{align*}
	This shows that $L_V\varphi=0$ if and only if  $g(\varphi(\nabla_V \varphi)X,Y)=(dv)(X,Y)-(dv)(\varphi X,\varphi Y)-(dv)(X,\xi)\eta(Y)$, completing the proof.
\end{proof}

\end{document}